\documentclass[11pt]{article} 
\usepackage{tikz}
\usepackage{amsmath,amsthm,amssymb}
\usepackage{mathrsfs}
\usepackage{graphicx}
\usepackage{subfig}
\usepackage{color,xcolor}
\usepackage{enumitem}
\usepackage{float}
\usepackage{pifont}

\usepackage{tabularx}
\usepackage{booktabs}
\usepackage{array}
\usepackage{multirow,multicol}
\usepackage{longtable}
\usepackage{makecell}
\usepackage{anyfontsize}
\usepackage{microtype}
\usepackage{geometry}
\usepackage[no-math]{fontspec}

\usepackage{hyperref}
\hypersetup{
  colorlinks=true,
  linkcolor=black,
  citecolor=blue,
  filecolor=blue,
  urlcolor=blue
}

\allowdisplaybreaks

\numberwithin{equation}{section}
\numberwithin{figure}{section}
\numberwithin{table}{section}

\usepackage{algorithm}
\usepackage{algpseudocode}
\floatname{algorithm}{Algorithm}
\algrenewcommand\algorithmicrequire{\textbf{Input:}}
\algrenewcommand\algorithmicensure{\textbf{Output:}}

\setlist{nolistsep}

\graphicspath{{./figure/}{./figures/}}

\newcolumntype{L}{X}
\newcolumntype{C}{>{\centering \arraybackslash}X}
\newcolumntype{R}{>{\raggedleft \arraybackslash}X}
\newcolumntype{P}[1]{>{\centering \arraybackslash}p{#1}}

\theoremstyle{plain}
\newtheorem{definition}{Definition}[section]

\newtheorem{lemma}{Lemma}[section]
\newtheorem{theorem}{Theorem}[section]

\renewenvironment{abstract}
{\par\noindent\ignorespaces}
{\par\noindent\ignorespacesafterend}
\usepackage{indentfirst}

\title{The Symmetry and Linear Stability of Convex 1+5 Coorbital Central Configurations with Homogeneous Potential}
\author {Yiyang Deng and Jiangtao Xu}
\date{}

\begin{document}

\maketitle

\begin{abstract}
\noindent\textbf{Abstract:} For the planar Newtonian 1+N-body problem when the N masses tend to zero, the corresponding relative equilibria become coorbital around the dominant mass. In this work, we focus on convex central configurations in the planar 1+N coorbital problem. For the 1+5 coorbital problem with the homogeneous potential, we prove that any convex coorbital central configuration with symmetric masses must have an axis of symmetry. Furthermore, under explicit restrictions on the angular variables in a homogeneous potential, we prove the linear stability of both convex symmetric 1+5 and convex 1+N coorbital central configurations.
\medskip

\noindent\textbf{Keywords:} 1+5-body problem; central configurations; symmetry; linear stability; homogeneous potential 
\end{abstract}


\section{Introduction}

Central configurations plays a very important role in N-body problems. Central configurations are those for which the total Newtonian acceleration of every body equals a constant multiplied by its position vector relative to the configuration's center of mass. The study of central configurations and relative equilibria provides an avenue for progress into the N-body problem. Euler \cite{Euler1767} and Lagrange \cite{Lagrange1772} characterized the relative equilibria for the Newtonian three-body problem. The collinear N-body configurations studied by Moulton \cite{Moulton1910}, who showed there is a unique (up to scaling) central configuration for any ordering of N positive masses on a line.

One of the most basic questions is whether there are finitely many equivalence classes of central configurations for each choice of positive masses.
It has been resolved for the Newtonian four-body problem \cite{Hampton2005}, the four-vortex problem \cite{Hampton2009, Yu2023}, partially for the Newtonian five-body problem \cite{Albouy2012} and the five-vortex problem \cite{Yu2025}.

In celestial mechanics, the 1+N-body problem describes the gravitational interaction between a dominant primary mass and N infinitesimal masses.
It serves as a fundamental model for investigating planetary ring systems and coorbital motion.
As early as the work of Maxwell \cite{Maxwell1859}, this framework was employed to characterize structures resembling Saturn’s rings.
Subsequent studies have further highlighted its physical relevance in coorbital satellite systems, such as the well-known Janus–Epimetheus coorbital moons of Saturn \cite{Yoder1983}, which travel on an interesting horseshoe orbit.
The preprint by Hall \cite{Hall1988} framed the problem more generally within the context of the study of central configurations.
These configurations, characterized by intricate geometric properties and a close relationship with the classical theory of central configurations, continue to attract considerable research attention.
In 1988, Salo and Yoder \cite{Salo1988} numerically extensively investigated the configurations and dynamics of the $1+N$-body problem for $N<10$ with identical infinitesimal masses.
Verrier and McInnes \cite{Verrier2014} analyzed horseshoe orbits by numerical continuations from relative equilibria in the $1+2$, $1+3$, and $1+4$ coorbital problems.

Deng, Hampton and Wang \cite{Deng2022} extended the approach of Renner and Sicardy \cite{Renner2004} in considering the antisymmetry of the mass coefficient matrix of the defining equations for these central configurations.
This manuscript focuses on the $1+N$-body problem for $N = 5$. Numerically it \cite{Salo1988} appears that there is a single $1+N$-body configuration with $N$ equal masses (the regular $N$-gon around the central mass) for $N \ge 9$, but lower values of $N$ exhibit more complexity.
Hall \cite{Hall1988} proved the uniqueness of the $1+N$ equal small masses central configuration for $N \ge e^{27000}$, which was improved to $N \ge e^{73}$ by Casasayas, Llibra and Nunes \cite{Casasayas1994}.


For a central mass that is large but finite relative to the coorbital infinitesimal masses, some additional results have been derived, all of which assume equal infinitesimal masses arranged on a regular polygonal ring.
Maxwell \cite{Maxwell1859} showed that for a sufficiently large central mass this configuration would be stable, although his analysis was incorrect for $N < 7$.
For $N \ge 7$, Moeckel \cite{Moeckel1994} found a necessary and sufficient criterion for the linear stability of relative equilibria in the 1+N body problem with small but not necessarily equal masses, and proved that Maxwell's ring is linearly stable if and only if $N \ge 7$. Roberts subsequently found a bifurcation value $M_{bif}$ and showed that this configuration is linearly stable precisely when the central mass satisfies $M > M_{bif}$.
Some related results on stability for regular polygonal $1+N$ are proved by Xu \cite{Xu2013}.

Very little is known about $1+N$-body central configurations with unequal masses for $N \ge 4$.
The $1+N$-body problem can be extended to other potentials, such as the point vortex model \cite{Barry2012}.
There are some results on $1+3$  \cite{Barry2016} and  $1+4$ \cite{Hoyer2026,Oliveira2013,Deng2019,Oliveira2020} planar vortex central configurations.
The $1+3$ problem already possesses a remarkably rich structure, which has been well-studied in the Newtonian case \cite{Corbera2011, Corbera2015}.
The central configurations for the Newtonian $1+4$ problem were rigorously determined by Albouy and Fu \cite{Albouy2009}. For $1+5$ problem, Su and Deng \cite{Su2022} studied the relationship between the masses of 5 satellites and given symmetric configurations when one satellite locate at the symmetry axis.

Renner and Sicardy studied the stationary configurations of coorbital satellites with arbitrary small masses. They proved that the linear stability of a coorbital relative equilibrium is determined by the eigenvalues of the matrix \cite{Renner2004} $A=M^{-1}H$, where H is the Hessian matrix of the effective potential.

In this paper we are interested in the central configurations of the planar $1 + 5$ body coorbital problem with symmetric masses.
In section 2, we recall the $1+N$ coorbital problem central configuration equations.
In section 3, we discuss the symmetry of convex $1+5$ coorbital central configurations in homogeneous potential.
In the last section 4, we consider the linear stability of the convex symmetric 1 + 5 coorbital central configurations with homogeneous potential and extend the similar result to convex 1 + N coorbital central configurations.

\section{1+N Coorbital Problem Central Configuration Equations}

In this paper, we consider the 1+N coorbital problem under the homogeneous potential. The derivation of the central configuration equations in \cite{Corbera2015} extends easily to this setting. Namely, for a configuration of $N$ infinitesimal masses to form a $1 + N$ planar central configuration they must lie on a common circle, which can be scaled to be the unit circle. For notational convenience, we introduce polar coordinates, denoting by $\theta_i$ the angular position of the $i$-th body, and adopt the shorthand $\theta_{ij} = \theta_i - \theta_j$.

With this notation, the governing equations can be expressed in terms of the following function:
\begin{equation*}
f_{ij}=\sin \left(\theta_{ij}\right)\left(\frac{1}{r_{ij}^{s}}-1\right),
\end{equation*}
where $s$ denotes the exponent of the interaction potential $s=3$ corresponds to the Newtonian gravitational case, while $s=2$ represents the point-vortex setting, and $r_{ij}$ is the distance between point $i$ and point $j$. We allow a slight abuse of notation by also treating $f$ as a single-variable function, written in the form:
\begin{equation*}
f(\theta)=\sin (\theta)\left(2^{-s}|\sin \theta / 2|^{-s}-1\right).
\end{equation*}

Note that, for points located on the unit circle, the mutual distances admit several equivalent representations:
\begin{equation*}
r_{ij}=\sqrt{2-2 \cos \left(\theta_{ij}\right)}=2\left|\sin \left(\frac{\theta_{ij}}{2}\right)\right|.
\end{equation*}

Let $f_{ij}$ denote the mass coefficient matrix of the coorbital system, where the entries $f_{ij}$ are defined accordingly with $f_{ii} = 0$. Then the 1+N coorbital central configuration equations can be written as:
\begin{equation}\label{MainEq}
Fm=0,
\end{equation}
where $m = (m_1, m_2, \dots, m_N)^T.$

Since $F$ is real and antisymmetric, it has purely imaginary eigenvalues, and the dimension of the kernel of $F$ has the same parity as $N$.

According to the paper \cite{Hall1988,Deng2022}, when $s>1$ and $s \neq 2$, we can view the coorbital relative equilibrium equations as conditions to have a critical point of the effective potential:
$$V = \sum_{1\leq i < j \leq N} m_i m_j \left( \frac{1}{(s-2) r_{ij}^{s-2}} + \frac{r_{ij}^2}{2} \right).$$
It is easy to check that
$$\frac{\partial V}{\partial \theta_i} = - m_i \sum_{j \neq i} m_j f_{ij}.$$

This can be extended to the point vortex case \cite{Barry2012, Barry2016} by using the potential
$$V = \sum_{1\leq i < j \leq N} m_i m_j \left( \log(r_{ij}) + \frac{r_{ij}^2}{2} \right).$$
In this paper, we refer to this vortex potential as the $s = 2$ case.

Let $M = diag (m_1, m_2, \dots, m_N)$,  the coorbital central configuration equations can be written as
$$F m = - M^{-1} \nabla V = 0.$$

\section{Convex 1+5 Coorbital Central Configurations with Homogeneous Potential}

\begin{figure}[htbp]
\centering
\begin{tikzpicture}[scale=3]

\draw[thick] (0,0) circle (1);

\draw[thick] (-1,0) -- (1,0);
\draw[thick] (0,-1) -- (0,1);

\def\thetaone{75}
\def\thetatwo{35}
\def\thetathree{350}
\def\thetafour{321}
\def\thetafive{300}

\draw[thick] (0,0) -- (\thetaone:1);
\draw[thick] (0,0) -- (\thetatwo:1);
\draw[thick] (0,0) -- (\thetathree:1);
\draw[thick] (0,0) -- (\thetafour:1);
\draw[thick] (0,0) -- (\thetafive:1);

\fill (0,0) circle (2pt);
\fill (\thetaone:1) circle (1pt) node at (\thetaone:1.2) {$\theta_1$};
\fill (\thetatwo:1) circle (1pt) node at (\thetatwo:1.2) {$\theta_2$};
\fill (\thetathree:1) circle (1pt) node at (\thetathree:1.2) {$\theta_3$};
\fill (\thetafour:1) circle (1pt) node at (\thetafour:1.2) {$\theta_4$};
\fill (\thetafive:1) circle (1pt) node at (\thetafive:1.2) {$\theta_5$};

\end{tikzpicture}

\caption{1+5 Convex Coorbital Configuration}
\label{convex}

\end{figure}

A convex coorbital configuration means that the convex hull of the coorbital points does not contain the center of the circle, as shown in the Figure \ref{convex}.
Deng, Hampton and Wang \cite{Deng2022} proved that there exist symmetric central configurations with asymmetric positive masses when $N = 4, 6, 8$ for $1+N$ coorbital problem.
In contrast to the result, they proved that a $1 + 5$ convex coorbital central configuration with $m_1 = m_5 > 0$ and $m_2 = m_4 > 0$ must be symmetric when $s=3$ (Newtonian potential).
But their method is not available in homogeneous potentials.

In this section, we conduct a systematic analysis of the symmetry conditions for $1+5$ convex coorbital configurations in homogeneous potential ($s>1$). 
We get the following result:

\begin{lemma}\label{mf}
For 1+5 coorbital configurations with homogeneous potentials, if $\theta_{ij}, \theta_{kl}$ are contained in $[0, 2\pi],$ then the quantity
$$\frac{\sin (\theta_{ij} ) r_{kl}^{s} (1-r_{ij}^{s})-\sin (\theta_{kl}) r_{ij}^{s} (1-r_{kl}^{s})}{r_{ij}^{s} r_{kl}^{s}}$$
can be written as
$$2\cos\frac{\theta_{ij}+\theta_{kl}}{2}\sin\frac{\theta_{kl}-\theta_{ij}}{2} + \frac{1}{2^{s-1}} \left( \frac{\cos\frac{\theta_{ij}}{2}}{\sin^{s-1}\frac{\theta_{ij}}{2}} - \frac{\cos\frac{\theta_{kl}}{2}}{\sin^{s-1}\frac{\theta_{kl}}{2}} \right).$$
\end{lemma}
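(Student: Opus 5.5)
Since $\theta_{ij},\theta_{kl}\in[0,2\pi]$, the half-angles lie in $[0,\pi]$, so $\sin\frac{\theta_{ij}}{2}\ge 0$ and $\sin\frac{\theta_{kl}}{2}\ge0$. Hence the absolute value in $r_{ij}=2|\sin(\theta_{ij}/2)|$ can be dropped: $r_{ij}=2\sin\frac{\theta_{ij}}{2}$, and likewise for $r_{kl}$. We assume these are nonzero, since otherwise the quantity is undefined. The plan is to split the fraction into two pieces, one depending only on $\theta_{ij}$ and one only on $\theta_{kl}$, and then simplify each with half-angle identities.

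First divide termwise. This gives
$$\frac{\sin\theta_{ij}(1-r_{ij}^s)}{r_{ij}^s}-\frac{\sin\theta_{kl}(1-r_{kl}^s)}{r_{kl}^s}
=\Big(\frac{\sin\theta_{ij}}{r_{ij}^s}-\frac{\sin\theta_{kl}}{r_{kl}^s}\Big)-\big(\sin\theta_{ij}-\sin\theta_{kl}\big).$$
The second bracket is handled by the sum-to-product formula:
$$-(\sin\theta_{ij}-\sin\theta_{kl})=\sin\theta_{kl}-\sin\theta_{ij}=2\cos\frac{\theta_{ij}+\theta_{kl}}{2}\sin\frac{\theta_{kl}-\theta_{ij}}{2},$$
which is exactly the first term of the claimed expression.

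For the first bracket, write $\sin\theta_{ij}=2\sin\frac{\theta_{ij}}{2}\cos\frac{\theta_{ij}}{2}$ and $r_{ij}^s=2^s\sin^s\frac{\theta_{ij}}{2}$. The quotient then becomes
$$\frac{\sin\theta_{ij}}{r_{ij}^s}=2^{1-s}\,\frac{\cos\frac{\theta_{ij}}{2}}{\sin^{s-1}\frac{\theta_{ij}}{2}}.$$
The same computation for $kl$, followed by subtraction, yields the second term with the factor $\frac{1}{2^{s-1}}$.

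The only delicate point is the sign convention. The absolute value in $r_{ij}$ must be removed legitimately, which is exactly what the hypothesis $\theta_{ij},\theta_{kl}\in[0,2\pi]$ provides. This also guarantees that $\sin^{s-1}\frac{\theta}{2}$ is a well-defined nonnegative real number for non-integer $s>1$. Beyond that, the argument is a routine trigonometric identity check.
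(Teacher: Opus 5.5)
Your proof is correct and uses the same ingredients as the paper's: $r=2\sin\frac{\theta}{2}$ on $[0,2\pi]$, the double-angle formula for $\sin\theta$, and sum-to-product. The only difference is the order of the steps. You split the quantity immediately as $f_{ij}-f_{kl}$ and simplify each piece, whereas the paper keeps a common numerator, factors out $2^{s+1}\sin\frac{\theta_{ij}}{2}\sin\frac{\theta_{kl}}{2}$, applies sum-to-product inside the bracket, and only then divides by $r_{ij}^s r_{kl}^s$. Your ordering is shorter and makes the separable structure explicit.
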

\begin{proof}
Under the assumptions, we can write the distances in terms of trigonometric functions and factor out some common terms:
\begin{equation*}
\begin{aligned}
& \sin (\theta_{ij} ) r_{kl}^{s} (1-r_{ij}^{s})-\sin (\theta_{kl}) r_{ij}^{s} (1-r_{kl}^{s})\\
= & 2^{s}\left(1-2^{s}\sin ^{s} \frac{\theta_{ij}}{2}\right) \sin ^{s} \frac{\theta_{kl}}{2} \sin \frac{\theta_{ij}}{2} \cos \frac{\theta_{ij}}{2}-2^{s}\left(1-2^{s}\sin ^{s} \frac{\theta_{kl}}{2}\right) \sin ^{s} \frac{\theta_{ij}}{2} \sin \frac{\theta_{kl}}{2} \cos \frac{\theta_{kl}}{2} \\
= & 2^{s+1}\sin \frac{\theta_{ij}}{2} \sin \frac{\theta_{kl}}{2}\left[\left(1-2^{s}\sin ^{s} \frac{\theta_{ij}}{2}\right) \sin^{s-1}\frac{\theta_{kl}}{2} \cos \frac{\theta_{ij}}{2}-\left(1-2^{s}\sin ^{s} \frac{\theta_{kl}}{2}\right) \sin^{s-1}\frac{\theta_{ij}}{2} \cos \frac{\theta_{kl}}{2}\right] \\
& \text{and now we continue with some elementary trigonometric identities (sum-to-product):} \\
 =  & 2^{s+1}\sin \frac{\theta_{ij}}{2} \sin \frac{\theta_{kl}}{2}\bigg(2^{s}\sin^{s-1}\frac{\theta_{ij}}{2}\sin^{s-1}\frac{\theta_{kl}}{2}\cos\frac{\theta_{ij}+\theta_{kl}}{2}\sin\frac{\theta_{kl}-\theta_{ij}}{2} + \\
 &\sin^{s-1}\frac{\theta_{kl}}{2}\cos\frac{\theta_{ij}}{2}-\sin^{s-1}\frac{\theta_{ij}}{2}\cos\frac{\theta_{kl}}{2} \bigg)
\end{aligned}
\end{equation*}

Furthermore, we have
\begin{equation*}
\begin{aligned}
&\frac{\sin (\theta_{ij} ) r_{kl}^{s} (1-r_{ij}^{s})-\sin (\theta_{kl}) r_{ij}^{s} (1-r_{kl}^{s})}{r_{ij}^{s} r_{kl}^{s}} \\
= & \frac{2^{s+1}\sin \frac{\theta_{ij}}{2} \sin \frac{\theta_{kl}}{2} (2^{s}\sin^{s-1}\frac{\theta_{ij}}{2}\sin^{s-1}\frac{\theta_{kl}}{2}\cos\frac{\theta_{ij}+\theta_{kl}}{2}\sin\frac{\theta_{kl}-\theta_{ij}}{2}
+\sin^{s-1}\frac{\theta_{kl}}{2}\cos\frac{\theta_{ij}}{2}-\sin^{s-1}\frac{\theta_{ij}}{2}\cos\frac{\theta_{kl}}{2} )}{2^{2s}\sin ^{s} \frac{\theta_{ij}}{2} \sin ^{s} \frac{\theta_{kl}}{2} }\\
= & 2\cos\frac{\theta_{ij}+\theta_{kl}}{2}\sin\frac{\theta_{kl}-\theta_{ij}}{2} + \frac{\sin^{s-1}\frac{\theta_{kl}}{2}\cos\frac{\theta_{ij}}{2}-\sin^{s-1}\frac{\theta_{ij}}{2}\cos\frac{\theta_{kl}}{2}}{2^{s-1}\sin ^{s-1} \frac{\theta_{ij}}{2} \sin ^{s-1} \frac{\theta_{kl}}{2} } \\
= & 2\cos\frac{\theta_{ij}+\theta_{kl}}{2}\sin\frac{\theta_{kl}-\theta_{ij}}{2} + \frac{1}{2^{s-1}} \left( \frac{\cos\frac{\theta_{ij}}{2}}{\sin^{s-1}\frac{\theta_{ij}}{2}} - \frac{\cos\frac{\theta_{kl}}{2}}{\sin^{s-1}\frac{\theta_{kl}}{2}} \right).
\end{aligned}
\end{equation*}
\end{proof}

\begin{theorem}
Let $s$ denote the exponent of the interaction potential. When $s>1$, any $1+5$ coorbital central configuration satisfying the ordering
$-\frac{\pi}{2} < \theta_5 < \theta_4 < \theta_3 < \theta_2 < \theta_1 < \frac{\pi}{2}$ together with the mass symmetry conditions $m_1 = m_5$ and $m_2 = m_4$, necessarily admits a symmetry axis.
\end{theorem}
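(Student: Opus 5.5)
Write the five angles as $\theta_1>\theta_2>\theta_3>\theta_4>\theta_5$ inside $(-\pi/2,\pi/2)$ and aim to show $\theta_{12}=\theta_{45}$ and $\theta_{23}=\theta_{34}$. Symmetry about the bisector of the arc through body 3 then follows. The plan is to use only equations from $Fm=0$ in which the masses enter antisymmetrically under the reflection $1\leftrightarrow5$, $2\leftrightarrow4$. Writing $m_1=m_5=a$, $m_2=m_4=b$, $m_3=c$, the five equations are
\begin{align*}
&b f_{12}+c f_{13}+b f_{14}+a f_{15}=0,\qquad a f_{21}+c f_{23}+b f_{24}+a f_{25}=0,\\
&a f_{31}+b f_{32}+b f_{34}+a f_{35}=0,\\
&a f_{41}+b f_{42}+c f_{43}+a f_{45}=0,\qquad a f_{51}+b f_{52}+c f_{53}+b f_{54}=0.
\end{align*}
Since $f$ is odd, $f_{15}=-f_{51}$ and $f_{24}=-f_{42}$. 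I would add the first and fifth equations, and add the second and fourth. This gives two relations that are linear in $(a,b,c)$:
\begin{align*}
&b\,(f_{12}+f_{54}) + c\,(f_{13}+f_{53}) + b\,(f_{14}+f_{52})=0,\\
&a\,(f_{21}+f_{45}) + c\,(f_{23}+f_{43}) + a\,(f_{25}+f_{41})=0,
\end{align*}
together with the middle equation $a(f_{31}+f_{35})+b(f_{32}+f_{34})=0$. The terms $f_{15}$ and $f_{24}$ cancel. Each bracket is a difference $f(\alpha)-f(\beta)$ of two positive angles, because $f_{54}=-f(\theta_{45})$ and so on. Each bracket vanishes on the symmetric configuration.

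Next I would introduce the asymmetry parameters $x=\theta_{12}-\theta_{45}$ and $y=\theta_{23}-\theta_{34}$. The differences of the corresponding angles then combine in pairs: $\theta_{13}-\theta_{35}=x+y$, $\theta_{14}-\theta_{25}=x$, and $\theta_{24}$ versus $\theta_{15}$ play no role. Lemma \ref{mf} is the tool for signing each bracket. It gives $f(\alpha)-f(\beta)$, up to the positive factor $r_{ij}^sr_{kl}^s$, as
\[
2\cos\tfrac{\alpha+\beta}{2}\sin\tfrac{\beta-\alpha}{2}+2^{1-s}\bigl(g(\alpha)-g(\beta)\bigr),\qquad g(\theta)=\frac{\cos(\theta/2)}{\sin^{s-1}(\theta/2)}.
\]
Here $g$ is strictly decreasing on $(0,\pi)$. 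Because the configuration is convex, every angle difference lies in $(0,\pi)$. Hence $\cos\frac{\alpha+\beta}{2}>0$ whenever $\alpha+\beta<\pi$; that covers $\theta_{12}+\theta_{45}$, $\theta_{23}+\theta_{34}$ and $\theta_{13}+\theta_{35}$, but not obviously $\theta_{14}+\theta_{25}$. When $\cos\frac{\alpha+\beta}{2}>0$ both terms have the sign of $\beta-\alpha$. So $f(\alpha)-f(\beta)$ has the sign of $\beta-\alpha$, i.e.\ $f$ is strictly decreasing on that range, and each bracket has a definite sign determined by $x$, $y$, $x+y$.

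The main argument would then be a sign-case analysis. Suppose $(x,y)\neq(0,0)$. Using the summed equations and the middle equation with $a,b,c>0$, I would show that every bracket in some equation has the same strict sign, which is a contradiction. For example, if $x\ge0$ and $y\ge0$ with one of them positive, then in the equation from bodies 1 and 5 the coefficient of $c$ has the sign of $-(x+y)$ and the coefficient of $b$ from $f_{12}+f_{54}$ has the sign of $-x$. The remaining term $f_{14}+f_{52}$ has sign $-x$ provided monotonicity of $f$ holds for the pair $(\theta_{14},\theta_{25})$. The mixed-sign cases, say $x>0>y$, need the middle equation $a(f_{31}+f_{35})+b(f_{32}+f_{34})=0$, whose brackets have the signs of $-(x+y)$ and $y$, combined with the second summed equation.

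I expect the main obstacle to be the pair $(\theta_{14},\theta_{25})$ and the mixed cases. The sum $\theta_{14}+\theta_{25}$ can exceed $\pi$, so the cosine term in Lemma \ref{mf} may have the wrong sign. To handle this I would try to show that the $g$-difference dominates, using $\sin\frac{\beta-\alpha}{2}\le\frac{|\beta-\alpha|}{2}$ and the lower bound $|g'(\theta)|\ge \frac{1}{2}\csc^{s}(\theta/2)\ge\frac12$. If that fails, I would eliminate the ratio $a/b$ between the middle equation and the summed equations and sign the resulting combination directly.
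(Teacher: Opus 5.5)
\textbf{Gap: the $(\theta_{14},\theta_{25})$ bracket.} The gap is where you suspected, and the fix you propose cannot close it. You cannot show that $f(\theta_{14})-f(\theta_{25})$ has the sign of $-x$, because that is false in part of the admissible region. Indeed $f'(\theta)=-\cos\theta-\frac{s+(s-2)\cos\theta}{2^{s+1}\sin^{s}(\theta/2)}$ gives $f'(\pi)=1-2^{-s}>0$. So $f$ is increasing on an interval ending at $\pi$, and when $\theta_{15}$ is close to $\pi$ both $\theta_{14}$ and $\theta_{25}$ lie in it. For example, take $s=3$, $\theta_{15}=170^\circ$, $\theta_{12}=20^\circ$, $\theta_{45}=15^\circ$. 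Then $x>0$, yet $f(\theta_{14})-f(\theta_{25})=f(155^\circ)-f(150^\circ)\approx+0.065$. No estimate of the $g$-difference against the cosine term can rescue this. (Your bound $|g'|\ge\frac12\csc^s$ also needs $s\ge2$.) Your fallback of eliminating $a/b$ is not yet an argument.

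\textbf{Missing idea: pair the brackets.} Never sign this bracket alone. In both summed equations it carries the same mass as the $(\theta_{12},\theta_{45})$ bracket: $b$ in the $1{+}5$ sum and $a$ in the $2{+}4$ sum. Since $\theta_{14}-\theta_{25}=\theta_{12}-\theta_{45}=x$, Lemma \ref{mf} gives both brackets the same sine factor $-\sin\frac{x}{2}$. Since $\theta_{12}+\theta_{45}=\theta_{15}-\theta_{24}$ and $\theta_{14}+\theta_{25}=\theta_{15}+\theta_{24}$, the two cosine factors add to $2\cos\frac{\theta_{15}}{2}\cos\frac{\theta_{24}}{2}>0$. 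Both $g$-differences have the sign of $-x$. Hence $B:=f(\theta_{12})-f(\theta_{45})+f(\theta_{14})-f(\theta_{25})$ has the strict sign of $-x$. So $\theta_{15}$ and $\theta_{24}$ do matter after all.

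\textbf{Sign slip in the middle equation.} The bracket $f_{31}+f_{35}=f(\theta_{35})-f(\theta_{13})$ has the sign of $x+y$, not $-(x+y)$; $f_{32}+f_{34}$ has the sign of $y$, as you say. With this corrected, $a(f_{31}+f_{35})+b(f_{32}+f_{34})=0$ alone rules out every $(x,y)\neq(0,0)$ except the mixed case $\operatorname{sgn}(x+y)=\operatorname{sgn}x=-\operatorname{sgn}y\neq0$. In particular, your case $x,y\ge0$ needs no $(\theta_{14},\theta_{25})$ term at all.

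\textbf{Closing the mixed case.} The $2{+}4$ sum reads $-a\,B+c\,(f(\theta_{23})-f(\theta_{34}))=0$. Here $-aB$ has the sign of $x$ and the second term has the sign of $-y$. These signs agree and are nonzero, which is a contradiction. The $1{+}5$ sum, $b\,B+c\,(f(\theta_{13})-f(\theta_{35}))=0$ with signs $-x$ and $-(x+y)$, works equally well. With these two repairs your outline becomes essentially the paper's proof.
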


\begin{proof}

Similarly, in the mass-coefficient matrix, the $1+5$ configuration yields two independent effective equations, which can be extracted as follows:
\begin{equation}\label{keyeq}
\begin{aligned}
&m_{1}(f_{13}-f_{35})+m_{2}(f_{23}-f_{34})=0 \\
&m_{1}(f_{12} + f_{14} - f_{25} -f_{45} ) - m_{3}(f_{23}-f_{34})=0
\end{aligned}
\end{equation}

In the case of homogeneous potentials, the transformation of the effective equations undergoes certain modifications. In particular, by using lemma \ref{mf}, we obtain:
\begin{equation*}
\begin{aligned}
  & m_{1}(f_{13}-f_{35})+m_{2}(f_{23}-f_{34}) \\
 =& m_{1}\left [\frac{\sin\theta_{13}r_{35}^{s}(1-r_{13}^{s})}{r_{13}^{s}r_{35}^{s}}-\frac{\sin\theta_{35}r_{13}^{s}(1-r_{35}^{s})}{r_{13}^{s}r_{35}^{s}}\right]
    +m_{2}\left[\frac{\sin\theta_{23}r_{34}^{s}(1-r_{23}^{s})}{r_{23}^{s}r_{34}^{s}}-\frac{\sin\theta_{34}r_{23}^{s}(1-r_{34}^{s})}{r_{23}^{s}r_{34}^{s}}\right]\\
 = & m_1 \left[2\sin\frac{\theta_{35}-\theta_{13}}{2}\cos\frac{\theta_{13}+\theta_{35}}{2}  +  \frac{1}{2^{s-1}}\left(\frac{\cos\frac{\theta_{13}}{2}}{\sin ^{s-1} \frac{\theta_{13}}{2}}  - \frac{\cos\frac{\theta_{35}}{2}}{\sin ^{s-1} \frac{\theta_{35}}{2}} \right)\right] \\
  +& m_2 \left[2\sin\frac{\theta_{34}-\theta_{23}}{2}\cos\frac{\theta_{23}+\theta_{34}}{2}+ \frac{1}{2^{s-1}}\left(\frac{\cos\frac{\theta_{23}}{2}}{\sin ^{s-1} \frac{\theta_{23}}{2}}  - \frac{\cos\frac{\theta_{34}}{2}}{\sin ^{s-1} \frac{\theta_{34}}{2}} \right)\right].
\end{aligned}
\end{equation*}
Next, we claim that $\theta_{13} = \theta_{35}$ and $\theta_{23} = \theta_{34}$ is the only solution of the equations \ref{keyeq}.

If $\theta_{13} \neq \theta_{35}$ and $\theta_{23} \neq \theta_{34}$, we have the following cases:
\begin{equation*}
 \begin{aligned}
\textbf{(1)} \left\{\begin{matrix}
\theta_{13}>\theta_{35} \\
\theta_{23}>\theta_{34}
\end{matrix}\right.
\textbf{(2)} \left\{\begin{matrix}
 \theta_{13}<\theta_{35} \\
 \theta_{23}<\theta_{34}
\end{matrix}\right.
\textbf{(3)} \left\{\begin{matrix}
 \theta_{13}=\theta_{35} \\
 \theta_{23}\neq\theta_{34}
\end{matrix}\right.
\textbf{(4)} \left\{\begin{matrix}
  \theta_{13}\neq\theta_{35} \\
  \theta_{23}=\theta_{34}
\end{matrix}\right.
\textbf{(5)} \left\{\begin{matrix}
 \theta_{13}>\theta_{35} \\
 \theta_{23}<\theta_{34}
\end{matrix}\right.
\textbf{(6)} \left\{\begin{matrix}
  \theta_{13}<\theta_{35}\\
  \theta_{23}>\theta_{34}
\end{matrix}\right..
\end{aligned}
\end{equation*}

For easy to discuss above cases, we set
$$f(x) = \frac{\cos x}{\sin^{s-1} x}.$$
The derivative of the function $f(x)$ with respect to the variable $x$ is obtained as
$$f'(x) = - \frac{ \sin^2  x + (s-1) \cos^2 x}{\sin^s x}.$$
When $s > 1$, we have $f'(x) < 0 $ for $0 < x < \pi$.

\textbf{Case (1).} $\theta_{13} > \theta_{35}$ and $\theta_{23} > \theta_{34}$

When $\theta_{13} > \theta_{35}$ and $\theta_{23} > \theta_{34}$, it is obvious that $\sin\frac{\theta_{35}-\theta_{13}}{2} < 0$ and $\sin\frac{\theta_{34} - \theta_{23}}{2}  <0,$ which implies the sign of the terms
\begin{equation*}
\begin{aligned}
\sin\frac{\theta_{35}-\theta_{13}}{2}\cos\frac{\theta_{13}+\theta_{35}}{2}, ~~~ \sin\frac{\theta_{34}-\theta_{23}}{2}\cos\frac{\theta_{23}+\theta_{34}}{2}.
\end{aligned}
\end{equation*}
are both negative for the convex configurations.

According to the monotonicity of the function $f(x)$, we have
$$\frac{\cos\frac{\theta_{13}}{2}}{\sin ^{s-1} \frac{\theta_{13}}{2}}  - \frac{\cos\frac{\theta_{35}}{2}}{\sin ^{s-1} \frac{\theta_{35}}{2}} <0,~~
 \frac{\cos\frac{\theta_{23}}{2}}{\sin ^{s-1} \frac{\theta_{23}}{2}}  - \frac{\cos\frac{\theta_{34}}{2}}{\sin ^{s-1} \frac{\theta_{34}}{2}} < 0.$$
Therefore, we get that
\begin{equation*}
\begin{aligned}
& m_{1}(f_{13}-f_{35})+m_{2}(f_{23}-f_{34}) \\
= & m_1 \left[2\sin\frac{\theta_{35}-\theta_{13}}{2}\cos\frac{\theta_{13}+\theta_{35}}{2}  +  \frac{1}{2^{s-1}}\left(\frac{\cos\frac{\theta_{13}}{2}}{\sin ^{s-1} \frac{\theta_{13}}{2}}  - \frac{\cos\frac{\theta_{35}}{2}}{\sin ^{s-1} \frac{\theta_{35}}{2}} \right) \right]\\
&+ m_2 \left[2\sin\frac{\theta_{34}-\theta_{23}}{2}\cos\frac{\theta_{23}+\theta_{34}}{2}+ \frac{1}{2^{s-1}}\left(\frac{\cos\frac{\theta_{23}}{2}}{\sin ^{s-1} \frac{\theta_{23}}{2}}  - \frac{\cos\frac{\theta_{34}}{2}}{\sin ^{s-1} \frac{\theta_{34}}{2}} \right)\right]<0.
\end{aligned}
\end{equation*}
It is impossible.

\textbf{Case (2).} $\theta_{13} < \theta_{35}$ and $\theta_{23} < \theta_{34}$

It is similar to case 1, we have
\begin{equation*}
\begin{aligned}
& m_{1}(f_{13}-f_{35})+m_{2}(f_{23}-f_{34}) \\
= & m_1 \left[2\sin\frac{\theta_{35}-\theta_{13}}{2}\cos\frac{\theta_{13}+\theta_{35}}{2}  +  \frac{1}{2^{s-1}}\left(\frac{\cos\frac{\theta_{13}}{2}}{\sin ^{s-1} \frac{\theta_{13}}{2}}  - \frac{\cos\frac{\theta_{35}}{2}}{\sin ^{s-1} \frac{\theta_{35}}{2}} \right)\right] \\
 & + m_2 \left[2\sin\frac{\theta_{34}-\theta_{23}}{2}\cos\frac{\theta_{23}+\theta_{34}}{2}+ \frac{1}{2^{s-1}}\left(\frac{\cos\frac{\theta_{23}}{2}}{\sin ^{s-1} \frac{\theta_{23}}{2}}  - \frac{\cos\frac{\theta_{34}}{2}}{\sin ^{s-1} \frac{\theta_{34}}{2}} \right)\right]>0.
\end{aligned}
\end{equation*}
It is still impossible.

\textbf{Case (3).} $\theta_{13} = \theta_{35}$ and $\theta_{23} \neq \theta_{34}$

Under the assumptions, we get
\begin{equation*}
\begin{aligned}
  & m_{1}(f_{13}-f_{35})+m_{2}(f_{23}-f_{34}) =  m_{2}(f_{23}-f_{34})\\
 =& m_2 \left[2\sin\frac{\theta_{34}-\theta_{23}}{2}\cos\frac{\theta_{23}+\theta_{34}}{2}+ \frac{1}{2^{s-1}}\left(\frac{\cos\frac{\theta_{23}}{2}}{\sin ^{s-1} \frac{\theta_{23}}{2}}  - \frac{\cos\frac{\theta_{34}}{2}}{\sin ^{s-1} \frac{\theta_{34}}{2}} \right)\right] \neq 0.
\end{aligned}
\end{equation*}
It is impossible.

\textbf{Case (4).} $\theta_{13} \neq \theta_{35}$ and $\theta_{23} = \theta_{34}$

By the above suppose, we have
\begin{equation*}
\begin{aligned}
  & m_{1}(f_{13}-f_{35})+m_{2}(f_{23}-f_{34}) = m_{1}(f_{13}-f_{35}) \\
= & m_1 \left[2\sin\frac{\theta_{35}-\theta_{13}}{2}\cos\frac{\theta_{13}+\theta_{35}}{2}  +  \frac{1}{2^{s-1}}\left(\frac{\cos\frac{\theta_{13}}{2}}{\sin ^{s-1} \frac{\theta_{13}}{2}}  - \frac{\cos\frac{\theta_{35}}{2}}{\sin ^{s-1} \frac{\theta_{35}}{2}} \right)\right] \neq 0.
\end{aligned}
\end{equation*}
It is impossible.

Then, we just need to consider the following two cases: (5) $\theta_{13}>\theta_{35}$ and $\theta_{23}<\theta_{34}$; (6) $\theta_{13}<\theta_{35}$ and $\theta_{23}>\theta_{34}.$

For the other effective equation, we proceed in a similar manner:
\begin{equation*}
\begin{aligned}
 & m_{1}(f_{12} + f_{14} -f_{25} - f_{45})+ m_{3}(f_{34} - f_{23}) \\
= & m_{1} \left(\frac{\sin(\theta_{12})r_{45}^{s}(1-r_{12}^{s})-\sin(\theta_{45})r_{12}^{s}(1-r_{45}^{s})}{r_{12}^{s}r_{45}^{s}} + \frac{\sin(\theta_{14})r_{25}^{s}(1-r_{14}^{s})-\sin(\theta_{25}) r_{14}^{s}(1-r_{25}^{s})}{r_{14}^{s} r_{25}^{s}} \right)\\
 &+m_{3}\frac{\sin(\theta_{34})r_{23}^{s}(1-r_{34}^{s})-\sin(\theta_{23})r_{34}^{s}(1-r_{23}^{s})}{r^{3}_{23}r^{3}_{34}}=0.
\end{aligned}
\end{equation*}

The first term of the above equation is
\begin{equation*}
\begin{aligned}
 &\frac{\sin \left(\theta_{12}\right) r_{45}^{s}\left(1-r_{12}^{s}\right)-\sin \left(\theta_{45}\right) r_{12}^{s}\left(1-r_{45}^{s}\right)}{r_{12}^{s} r_{45}^{s}}
 +\frac{\sin \left(\theta_{14}\right) r_{25}^{s}\left(1-r_{14}^{s}\right)-\sin \left(\theta_{25}\right) r_{14}^{s}\left(1-r_{25}^{s}\right)}{r_{14}^{s} r_{25}^{s}} \\
 = & 2\cos\frac{\theta_{12}+\theta_{45}}{2}\sin\frac{\theta_{45}-\theta_{12}}{2} + \frac{\sin^{s-1}\frac{\theta_{45}}{2}\cos\frac{\theta_{12}}{2}-\sin^{s-1}\frac{\theta_{12}}{2}\cos\frac{\theta_{45}}{2}}{2^{s-1}\sin ^{s-1} \frac{\theta_{12}}{2} \sin ^{s-1} \frac{\theta_{45}}{2} } + \\
  & 2\cos\frac{\theta_{14}+\theta_{25}}{2}\sin\frac{\theta_{25}-\theta_{14}}{2} + \frac{\sin^{s-1}\frac{\theta_{25}}{2}\cos\frac{\theta_{14}}{2}-\sin^{s-1}\frac{\theta_{14}}{2}\cos\frac{\theta_{25}}{2}}{2^{s-1}\sin ^{s-1} \frac{\theta_{14}}{2} \sin ^{s-1} \frac{\theta_{25}}{2} } \\
 = & 2\sin\frac{\theta_{45}-\theta_{12}}{2} \left( \cos\frac{\theta_{12}+\theta_{45}}{2} + \cos\frac{\theta_{14}+\theta_{25}}{2} \right)  + \\
  &\frac{1}{2^{s-1}}\left(\frac{\cos\frac{\theta_{12}}{2}}{\sin ^{s-1} \frac{\theta_{12}}{2}}  - \frac{\cos\frac{\theta_{45}}{2}}{\sin ^{s-1} \frac{\theta_{45}}{2}} \right) + \frac{1}{2^{s-1}}\left(\frac{\cos\frac{\theta_{14}}{2}}{\sin ^{s-1} \frac{\theta_{14}}{2}}  - \frac{\cos\frac{\theta_{25}}{2}}{\sin ^{s-1} \frac{\theta_{25}}{2}} \right).
\end{aligned}
\end{equation*}

It is easy to see that
\begin{equation*}
\begin{aligned}
\cos \left(\frac{\theta_{12}}{2}+\frac{\theta_{45}}{2}\right)+\cos \left(\frac{\theta_{14}}{2}+\frac{\theta_{25}}{2}\right) & =\cos \left(\frac{\theta_{15}}{2}-\frac{\theta_{24}}{2}\right)+\cos \left(\frac{\theta_{15}}{2}+\frac{\theta_{24}}{2}\right) \\
& =2 \cos \frac{\theta_{15}}{2} \cos \frac{\theta_{24}}{2}.
\end{aligned}
\end{equation*}

When the configuration is convex, the sign of $\cos \frac{\theta_{15}}{2} \cos \frac{\theta_{24}}{2}$ is positive.

From $\theta_{13}>\theta_{35}$ and $\theta_{23}<\theta_{34}$, it implies $\theta_{12} > \theta_{45}$. Because $\theta_{12} - \theta_{45} = \theta_{14} - \theta_{25}$, we also have $\theta_{14}>\theta_{25}.$

Similarly, by $\theta_{13} < \theta_{35}$ and $\theta_{23}> \theta_{34}$, we have $\theta_{12} < \theta_{45}, \theta_{14} < \theta_{25}.$

So, the cases (5) and (6) can be replaced by
\begin{equation*}
\begin{aligned}
\textbf{(5)} \left\{\begin{matrix}
 \theta_{13} > \theta_{35} \\
 \theta_{23} < \theta_{34} \\
 \theta_{12} > \theta_{45} \\
 \theta_{14} > \theta_{25}
\end{matrix}\right.
\quad \qquad \textbf{(6)} \left\{\begin{matrix}
  \theta_{13} < \theta_{35} \\
  \theta_{23} > \theta_{34} \\
  \theta_{12} < \theta_{45} \\
  \theta_{14} < \theta_{25}
\end{matrix}\right..
\end{aligned}
\end{equation*}

\textbf{Case (5).}

Again applying the lemma \ref{mf}, we have
\begin{equation*}
\begin{aligned}
&  m_1 \left( \frac{\sin(\theta_{12}) r_{45}^{s}\big(1-r_{12}^{s}\big)-\sin(\theta_{45}) r_{12}^{s}\big(1-r_{45}^{s}\big)}{r_{12}^{s} r_{45}^{s}}
 + \frac{\sin(\theta_{14}) r_{25}^{s}\big(1-r_{14}^{s}\big)-\sin(\theta_{25}) r_{14}^{s}\big(1-r_{25}^{s}\big)}{r_{14}^{s} r_{25}^{s}} \right) \\
&  + m_3 \frac{\sin(\theta_{34})r_{23}^{s}(1-r_{34}^{s}) - \sin(\theta_{23})r_{34}^{s}(1-r_{23}^{s})}{r_{23}^{s}r_{34}^{s}}   \\
= & m_1 \Bigg[
2\sin\frac{\theta_{45}-\theta_{12}}{2} \left( \cos\frac{\theta_{12}+\theta_{45}}{2} + \cos\frac{\theta_{14}+\theta_{25}}{2} \right)
+ \frac{1}{2^{s-1}}\left( \frac{\cos\frac{\theta_{12}}{2}}{\sin^{s-1}\frac{\theta_{12}}{2}} - \frac{\cos\frac{\theta_{45}}{2}}{\sin^{s-1}\frac{\theta_{45}}{2}} \right) \\
& + \frac{1}{2^{s-1}}\left( \frac{\cos\frac{\theta_{14}}{2}}{\sin^{s-1}\frac{\theta_{14}}{2}} - \frac{\cos\frac{\theta_{25}}{2}}{\sin^{s-1}\frac{\theta_{25}}{2}} \right)
\Bigg] \\
& + m_3 \Bigg[
2\sin\frac{\theta_{23}-\theta_{34}}{2}\cos\frac{\theta_{23}+\theta_{34}}{2}
+ \frac{1}{2^{s-1}}\left( \frac{\cos\frac{\theta_{34}}{2}}{\sin^{s-1}\frac{\theta_{34}}{2}} - \frac{\cos\frac{\theta_{23}}{2}}{\sin^{s-1}\frac{\theta_{23}}{2}} \right)
\Bigg].
\end{aligned}
\end{equation*}

By previous analysis and the monotonicity of the function $f(x)$, we have
$$\frac{\cos\frac{\theta_{12}}{2}}{\sin ^{s-1} \frac{\theta_{12}}{2}}  - \frac{\cos\frac{\theta_{45}}{2}}{\sin ^{s-1} \frac{\theta_{45}}{2}} <0,~~
\frac{\cos\frac{\theta_{14}}{2}}{\sin ^{s-1} \frac{\theta_{14}}{2}}  - \frac{\cos\frac{\theta_{25}}{2}}{\sin ^{s-1} \frac{\theta_{25}}{2}} < 0,~~ \frac{\cos\frac{\theta_{34}}{2}}{\sin ^{s-1} \frac{\theta_{34}}{2}}  - \frac{\cos\frac{\theta_{23}}{2}}{\sin ^{s-1} \frac{\theta_{23}}{2}} < 0.$$

It implies that
\begin{equation*}
\begin{aligned}
\Bigg[
&2\sin\frac{\theta_{45}-\theta_{12}}{2} \left( \cos\frac{\theta_{12}+\theta_{45}}{2} + \cos\frac{\theta_{14}+\theta_{25}}{2} \right)
+ \frac{1}{2^{s-1}}\left( \frac{\cos\frac{\theta_{12}}{2}}{\sin^{s-1}\frac{\theta_{12}}{2}} - \frac{\cos\frac{\theta_{45}}{2}}{\sin^{s-1}\frac{\theta_{45}}{2}} \right) \\
& \qquad + \frac{1}{2^{s-1}}\left( \frac{\cos\frac{\theta_{14}}{2}}{\sin^{s-1}\frac{\theta_{14}}{2}} - \frac{\cos\frac{\theta_{25}}{2}}{\sin^{s-1}\frac{\theta_{25}}{2}} \right)
\Bigg] < 0
\end{aligned}
\end{equation*}
and
\begin{equation*}
\begin{aligned}
\Bigg[
2\sin\frac{\theta_{23}-\theta_{34}}{2}\cos\frac{\theta_{23}+\theta_{34}}{2}
+ \frac{1}{2^{s-1}}\left( \frac{\cos\frac{\theta_{34}}{2}}{\sin^{s-1}\frac{\theta_{34}}{2}} - \frac{\cos\frac{\theta_{23}}{2}}{\sin^{s-1}\frac{\theta_{23}}{2}} \right)
\Bigg] < 0.
\end{aligned}
\end{equation*}
It means that $m_{1}(f_{12} + f_{14} -f_{25} - f_{45})+ m_{3}(f_{34} - f_{23}) < 0$ for positive masses. It is impossible.

\textbf{Case (6).}

Similarly, we have
\begin{equation*}
\begin{aligned}
\Bigg[
&2\sin\frac{\theta_{45}-\theta_{12}}{2} \left( \cos\frac{\theta_{12}+\theta_{45}}{2} + \cos\frac{\theta_{14}+\theta_{25}}{2} \right)
+ \frac{1}{2^{s-1}}\left( \frac{\cos\frac{\theta_{12}}{2}}{\sin^{s-1}\frac{\theta_{12}}{2}} - \frac{\cos\frac{\theta_{45}}{2}}{\sin^{s-1}\frac{\theta_{45}}{2}} \right) \\
& \qquad + \frac{1}{2^{s-1}}\left( \frac{\cos\frac{\theta_{14}}{2}}{\sin^{s-1}\frac{\theta_{14}}{2}} - \frac{\cos\frac{\theta_{25}}{2}}{\sin^{s-1}\frac{\theta_{25}}{2}} \right)
\Bigg] >0
\end{aligned}
\end{equation*}
and
\begin{equation*}
\begin{aligned}
\Bigg[
2\sin\frac{\theta_{23}-\theta_{34}}{2}\cos\frac{\theta_{23}+\theta_{34}}{2}
+ \frac{1}{2^{s-1}}\left( \frac{\cos\frac{\theta_{34}}{2}}{\sin^{s-1}\frac{\theta_{34}}{2}} - \frac{\cos\frac{\theta_{23}}{2}}{\sin^{s-1}\frac{\theta_{23}}{2}} \right)
\Bigg] > 0.
\end{aligned}
\end{equation*}
It means that $m_{1}(f_{12} + f_{14} -f_{25} - f_{45})+ m_{3}(f_{34} - f_{23}) > 0$ for positive masses. It is impossible.

In summary, we conclude that $\theta_{12} = \theta_{45}$ and $\theta_{23} = \theta_{34}$  is the unique solution of the equations \ref{keyeq}.
By $\theta_{12} = \theta_{45}$ and  $\theta_{23} = \theta_{34}$, we have $\theta_{12} = \theta_{45},$ which implies that the configuration must be symmetric.
\end{proof}

\section{Linear Stability of the Convex 1 + 5 Coorbital Central Configurations with Homogeneous Potential}

In this section, we will discuss the linear stability of the convex $1+5$ coorbital central configurations with  homogeneous potentials $s>1$.
Here, we denote the Hessian matrix of $V$ by
\begin{align*}
  H = &  \left(
    \begin{array}{cccc}
    \frac{\partial^2 V}{\partial \theta_1^2}& \frac{\partial^2 V}{\partial \theta_1 \partial \theta_2} & \cdots  & \frac{\partial^2 V}{\partial \theta_1 \partial \theta_5} \\
    \frac{\partial^2 V}{\partial \theta_2 \partial \theta_1} & \frac{\partial^2 V}{\partial \theta_2^2} & \cdots  & \frac{\partial^2 V}{\partial \theta_2 \partial \theta_5} \\
      \vdots &  \vdots & \ddots & \vdots    \\
    \frac{\partial^2 V}{\partial \theta_5 \partial \theta_1} & \frac{\partial^2 V}{\partial \theta_5 \partial \theta_2}& \cdots & \frac{\partial^2 V}{\partial \theta_5^2} \\
    \end{array}
  \right) \\
    = & \left(
    \begin{array}{cccc}
    \sum_{j\neq 1}m_1 m_j f_{1j}'& - m_1 m_2 f_{12}' & \cdots  & - m_1 m_5 f_{15}' \\
      - m_1 m_2 f_{12}' &  \sum_{j\neq 2}m_2 m_j f_{2j}' & \cdots  &  - m_2 m_5 f_{25}' \\
      \vdots &  \vdots & \ddots & \vdots    \\
      - m_1 m_5 f_{15}' & - m_2 m_5 f_{25}' & \cdots & \sum_{j\neq 5} m_j m_5 f_{j5}' \\
    \end{array}
  \right)
\end{align*}
where $f_{ij}'= f'(\theta_{ij}) = \left(\sin(\theta_{ij}) \left[ -1 + \frac{1}{2^s|\sin \frac{\theta_{ij}}{2}|^s} \right] \right)' = -\cos(\theta_{ij}) - \frac{s + (s-2) \cos(\theta_{ij})}{2^{s+1}|\sin \frac{\theta_{ij}}{2}|^s}$.

Furthermore, for convex symmetric case, we have
$$f_{12}'= f_{45}', \quad f_{13}'= f_{35}', \quad f_{14}'= f_{25}', \quad f_{15}', \quad f_{23}'= f_{34}', \quad f_{24}'.$$

It was established by Moeckel that, for a central mass sufficiently large, a relative equilibrium is linearly stable precisely when it constitutes a local minimum of the potential function $V$. The linearization of the equations of motion for coorbital configurations were obtained by Renner and Sicardy:
\begin{equation*}
\begin{aligned}
\left(\begin{array}{l}\dot{\delta\theta} \\ \dot{\delta r}\end{array}\right)=\left(\begin{array}{cc}0 & -\frac{3}{2} I_{N} \\ -2 A & 0\end{array}\right)\left(\begin{array}{l}\delta \theta \\ \delta r\end{array}\right)
\end{aligned}
\end{equation*}
where \(A=M^{-1} H\) is the acceleration coefficient matrix and \(I_{N}\) is the \(N $×$ N\) identity matrix.

They established that a coorbital relative equilibrium is linearly stable precisely when the matrix $A$ possesses no positive eigenvalues, apart from one zero eigenvalue that originates from the rotational symmetry of the system.
According to the paper \cite{Deng2022,Renner2004} we just need to prove the eigenvalues of the matrix $H$ are nonpositive. $H$ having nonpositive eigenvalues is equivalent to $-H$ having nonnegative eigenvalues.

In paper \cite{Deng2022},  for the convex symmetric $1+5$ coorbital configurations in Newtonian case, they get the simple bound.
\begin{lemma}\label{Deng} The convex symmetric $1+5$ coorbital central configurations with $0 = \theta_3  < \theta_2 < \theta_1 < \frac{\pi}{2}$ and $\theta_{15} \leq \pi$ are contained in the set $\mathcal{C}$ defined by $\frac{\pi}{6} < \theta_{2} < \frac{\pi}{3}$ and $\theta_{12} < \frac{\pi}{3}$.
\end{lemma}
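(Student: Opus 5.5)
The plan is to reduce the central configuration equations \eqref{MainEq} to two scalar equations in two angles and then exclude regions of the $(\theta_2,\theta_{12})$ plane by sign arguments on the single-variable function $f$ with $s=3$. Put $a=\theta_{23}=\theta_2$ and $b=\theta_{12}$. By symmetry, $\theta_{34}=a$, $\theta_{45}=b$, $\theta_{13}=a+b$, $\theta_{24}=2a$, $\theta_{14}=\theta_{25}=2a+b$ and $\theta_{15}=2a+2b\le\pi$, with $m_1=m_5$ and $m_2=m_4$. The equation for body $3$ then holds automatically, and the equations for bodies $4,5$ mirror those for bodies $2,1$. We are left with
\begin{equation*}
\begin{aligned}
&m_2 f(b)+m_3 f(a+b)+m_2 f(2a+b)+m_1 f(2a+2b)=0,\\
&m_3 f(a)+m_2 f(2a)+m_1\bigl(f(2a+b)-f(b)\bigr)=0 .
\end{aligned}
\end{equation*}
All arguments lie in $(0,\pi]$. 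On $(0,\pi]$, $f(\theta)=\sin\theta\,\bigl(8\sin^3(\theta/2)\bigr)^{-1}-\sin\theta$ is positive on $(0,\pi/3)$, vanishes at $\pi/3$ and at $\pi$, and is negative on $(\pi/3,\pi)$.

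\textbf{Step 1 ($b<\pi/3$).} In the first equation the angles satisfy $b<a+b<2a+b<2a+2b$. If $b\ge\pi/3$, then every term is $\le 0$, and the terms $f(a+b),f(2a+b)$ are strictly negative. Indeed $\pi/3<a+b<2a+b<\pi$, and $2a+b<\pi$ holds because $2a+2b\le\pi$. Since the masses are positive, the left-hand side is negative, a contradiction. Hence $\theta_{12}=b<\pi/3$.

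\textbf{Step 2 ($a<\pi/3$).} Suppose $a\ge\pi/3$. Then $f(a)\le0$. Also $2a\in[2\pi/3,\pi)$, so $f(2a)<0$. Finally $2a+b\in(\pi/3,\pi)$ gives $f(2a+b)<0<f(b)$ by Step 1. So every term of the second equation is $\le0$, and some are strictly negative, a contradiction. Hence $\theta_2<\pi/3$.

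\textbf{Step 3 ($a>\pi/6$), the main obstacle.} Suppose $a\le\pi/6$. Then $f(a)>0$ and $f(2a)\ge0$, so the second equation forces $f(2a+b)<f(b)$. This is not immediately contradictory, because $f$ is not monotone on $(0,\pi)$: from $f'(\theta)=-\cos\theta-(3+\cos\theta)/(16|\sin\frac{\theta}{2}|^3)$ one sees $f'>0$ near $\pi$. So both equations have to be used together.

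I would first solve the second equation for $m_1$:
\begin{equation*}
m_1=\frac{m_3f(a)+m_2f(2a)}{f(b)-f(2a+b)}>0 .
\end{equation*}
Substituting into the first equation gives a linear relation in $m_2,m_3$, after multiplying through by the positive quantity $f(b)-f(2a+b)$:
\begin{equation*}
m_2\Bigl[(f(b)+f(2a+b))(f(b)-f(2a+b))+f(2a)f(2a+2b)\Bigr]+m_3\Bigl[f(a+b)(f(b)-f(2a+b))+f(a)f(2a+2b)\Bigr]=0 .
\end{equation*}
The goal is to show that both brackets are strictly positive on the region $\{0<a\le\pi/6,\ 0<b<\pi/3,\ 2a+2b\le\pi\}$, which contradicts positivity of $m_2,m_3$.

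\textbf{Handling the two brackets.} The second bracket reduces to comparing $f(a+b)\,f(b)$ with $|f(a)f(2a+2b)|$, because $f(2a+b)\le0$ when $2a+b\ge\pi/3$. For the first bracket, the key is that $f(b)^2$ dominates, since $f(b)$ blows up like $1/b^2$ while the negative values of $f$ on $(\pi/3,\pi]$ stay bounded. The subcase $2a+b<\pi/3$ should also be treated directly, since then all of $f(b),f(2a+b),f(2a),f(a)$ are positive.

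I expect this step to need explicit estimates. The tools are the bound $\min_{[\pi/3,\pi]}f\ge -c$ for an explicit constant $c$ (numerically about $-0.77$), lower bounds for $f$ on $(0,\pi/3)$, and, on the compact part of the region away from the singularities, a subdivision of the region into small boxes with monotonicity/Lipschitz bounds for $f$ on each box. Combining Steps 1--3 places the configuration in $\mathcal C$.
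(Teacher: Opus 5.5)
Your Steps 1 and 2 are correct. The sign pattern of $f$ on $(0,\pi]$ is the same for every $s>1$, so they give $\theta_{12}<\pi/3$ and $\theta_2<\pi/3$ in general. The gap is Step 3, and better estimates will not fix it: the inequality you aim for is false, and so is the bound $\theta_2>\pi/6$ itself. Write $B_1$ and $B_2$ for your $m_2$- and $m_3$-brackets. The kernel of the $2\times 3$ system in $(m_1,m_2,m_3)$ is spanned by the cofactor vector $\bigl((f(b)+f(2a+b))f(a)-f(a+b)f(2a),\,-B_2,\,B_1\bigr)$. So positive masses exist exactly where $B_1>0>B_2$ and the first entry is positive.

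Take the point $a=b=\pi/6$ of your region, i.e.\ five equally spaced bodies. There $f(2a)=f(a+b)=0$, hence $B_2=f(\pi/6)f(2\pi/3)<0<B_1$. Solving directly gives the central configuration $m_1=m_5=1$, $m_2=m_4=-f(2\pi/3)/(f(\pi/6)+f(\pi/2))$, $m_3=1-f(\pi/2)/f(\pi/6)$. For $s=3$, with $f(\pi/6)\approx3.105$, $f(\pi/2)\approx-0.646$ and $f(2\pi/3)\approx-0.699$, this is $m_2\approx0.284$, $m_3\approx1.208$. The masses are positive for every $s>1$, because $f(\pi/6)+f(\pi/2)>0$. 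Since this kernel vector is strictly positive, configurations with $\theta_2$ slightly below $\pi/6$ also carry positive masses. Explicitly, for $s=3$, $a=\pi/9$, $b=2\pi/9$ gives $m_1=m_5=1$, $m_2=m_4\approx0.828$, $m_3\approx0.097$.

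Your heuristic that $f(b)^2$ dominates also fails elsewhere in the region. Once $2a+2b>\pi/3$, the terms $f(2a)f(2a+2b)$ and $f(a)f(2a+2b)$ are unbounded below as $a\to0$. For instance, at $a=\pi/36$, $b=11\pi/36$ both brackets are large and negative. The contradiction survives there, but only because the brackets share a sign; near $a=b=\pi/6$ they do not.

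What your sign method does prove, from the first equation, is $2a+2b>\pi/3$, i.e.\ $\theta_{15}>\pi/3$. Indeed, if $2a+2b\le\pi/3$, every term is $\ge0$ and $m_2f(b)>0$. This is probably the intended lower bound (on $\theta_1$ rather than $\theta_2$), and it is the only lower bound the later stability theorem refers to. The paper gives no argument for this lemma beyond citing Deng, Hampton and Wang, so nothing on its side closes Step 3.
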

Actually, above lemma is still right for homogeneous potentials $s>1$. Do not need to change anything in the proof of Lemma \ref{Deng} in paper \cite{Deng2022}.

For the linear stability of the  convex symmetric $1+5$ coorbital central configurations, we have the following result:
\begin{theorem}\label{LW} For the convex symmetric $1+5$ coorbital central configurations with positive masses, if the homogeneous potential index $s>1$, then there exists a positive constant $\mu$ such that the central configuration is linear stability when $\frac{\pi}{3} < \theta_{15} \le 2\arcsin \mu$ \emph{($2\arcsin \mu > \frac{\pi}{2}$)}.
\end{theorem}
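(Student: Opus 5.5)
The approach is to exploit the fact that $-H$ is a weighted graph Laplacian. Because each diagonal entry of $H$ equals minus the sum of the off-diagonal entries in its row, we can write
\begin{equation*}
-H=\sum_{1\le i<j\le 5} m_i m_j\,\bigl(-f'_{ij}\bigr)\,(e_i-e_j)(e_i-e_j)^T .
\end{equation*}
Consequently, if every weight $-f'_{ij}$ is nonnegative, then $-H$ is positive semidefinite. Its kernel always contains $(1,\dots,1)^T$, which corresponds to the rotational symmetry. If the graph whose edges are the pairs with $-f'_{ij}>0$ is connected, this kernel is exactly one-dimensional. Finally, $A=M^{-1}H$ is similar to $M^{-1/2}HM^{-1/2}$, which by Sylvester's law has the same inertia as $H$. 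So $A$ has no positive eigenvalues apart from the simple zero, and the Renner--Sicardy criterion recalled above then yields linear stability. The whole proof therefore reduces to a sign condition on the scalar function $f'(\theta)$ on the range of angles that occur.

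The first step is to show that $f'(\theta)<0$ for $0<\theta\le \pi/2$. We have
\begin{equation*}
-f'(\theta)=\cos\theta+\frac{s+(s-2)\cos\theta}{2^{s+1}\sin^s\frac{\theta}{2}} .
\end{equation*}
On $(0,\pi/2]$ we have $\cos\theta\ge 0$. Moreover $s+(s-2)\cos\theta\ge \min(s,\,2s-2)>0$ because $s>1$. Hence $-f'(\theta)>0$ on this interval.

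The second step extends the interval past $\pi/2$ using the substitution $x=\sin\frac{\theta}{2}$, under which $\cos\theta=1-2x^2$. This gives
\begin{equation*}
g(x):=-f'(\theta)=1-2x^2+\frac{2s-2-2(s-2)x^2}{2^{s+1}x^s},\qquad x\in(0,1].
\end{equation*}
The first step shows $g(\tfrac{\sqrt2}{2})>0$, while $g(1)=-1+2^{-s}<0$. By continuity, $g$ has a smallest zero $\mu\in(\tfrac{\sqrt2}{2},1)$, and we define $\mu$ this way. Then $g>0$ on $(0,\mu)$, and the corresponding angle satisfies $2\arcsin\mu>2\arcsin\tfrac{\sqrt2}{2}=\tfrac{\pi}{2}$. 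This constant depends only on $s$.

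The third step applies this to the configuration. Because the bodies are ordered $\theta_5<\dots<\theta_1$, every mutual angle satisfies $0<\theta_{ij}\le\theta_{15}$. If $\theta_{15}\le 2\arcsin\mu$, then $-f'_{ij}>0$ for every pair except possibly $(1,5)$, for which $-f'_{15}\ge 0$ with equality only at the endpoint. The remaining edges already form a connected graph (for instance the path $1\!-\!2\!-\!3\!-\!4\!-\!5$), so $\ker(-H)=\mathrm{span}\{(1,\dots,1)^T\}$, and stability follows. The lower bound $\frac{\pi}{3}<\theta_{15}$ is not needed for the sign argument. It only records, via Lemma \ref{Deng} (where $\theta_{15}=2\theta_1>2\theta_2>\pi/3$), the actual range in which convex symmetric configurations lie, so the interval in the statement is nonempty and meaningful.

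The main obstacle is the choice of $\mu$. The function $g$ need not be monotone for general $s$, so I take the smallest root in $(\tfrac{\sqrt2}{2},1)$ rather than claiming uniqueness. If an explicit value is wanted, I would try to show $g'<0$ on $[\tfrac{\sqrt2}{2},1]$ by direct differentiation, since both terms appear to decrease there when $s>1$. This refinement is not needed for the existence statement.
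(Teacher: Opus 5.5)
Your proof is correct and follows essentially the paper's route. Both arguments reduce stability to showing $f'(\theta)\le 0$ on $(0,2\arcsin\mu]$, using the same substitution $t=\sin\frac{\theta}{2}$ and the same endpoint values, and your weighted-Laplacian decomposition of $-H$ is just an explicit form of the diagonal-dominance lemma the paper invokes. Your additions fill in details the paper leaves implicit: connectivity of the positive-weight graph gives a simple zero eigenvalue, and Sylvester's law transfers the inertia from $H$ to $A=M^{-1}H$. Your smallest-root definition of $\mu$ coincides with the paper's $\mu$, because the paper's $f(t)$ is in fact strictly decreasing on $[\frac{\sqrt{2}}{2},1]$ for $s>1$, so the root is unique.
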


Before proving the above theorem, we need to give the definition of \textbf{Diagonally Dominant Matrix}.

\begin{definition}
A square matrix $A$ is called \textbf{Diagonally Dominant} if $|a_{ii}| \geq \sum_{i\neq j} |a_{ij}|$ for all $i$.
\end{definition}

For a diagonally dominant matrix, there is a well-known result as follows:
\begin{lemma}\label{bf}
A symmetric diagonally dominant real matrix with nonnegative diagonal entries is positive semidefinite.
\end{lemma}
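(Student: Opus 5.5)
We need to prove Lemma \ref{bf}: a symmetric diagonally dominant real matrix with nonnegative diagonal entries is positive semidefinite. The natural route is through the eigenvalues, localized by Gershgorin discs.

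Let $A=(a_{ij})$ be such an $n\times n$ matrix. Since $A$ is real symmetric, the spectral theorem gives that all its eigenvalues are real and that $A$ is orthogonally diagonalizable. Hence $A$ is positive semidefinite if and only if every eigenvalue is nonnegative, because $x^TAx=\sum_k \lambda_k (q_k^Tx)^2$ in an orthonormal eigenbasis $\{q_k\}$. So it suffices to show every eigenvalue $\lambda$ satisfies $\lambda\ge 0$.

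Take an eigenvector $x\neq 0$ with $Ax=\lambda x$, and choose an index $i$ with $|x_i|=\max_k|x_k|>0$. The $i$-th row gives
$$(\lambda-a_{ii})x_i=\sum_{j\ne i}a_{ij}x_j .$$
Dividing by $|x_i|$ and using $|x_j|\le|x_i|$ yields
$$|\lambda-a_{ii}|\le\sum_{j\ne i}|a_{ij}|.$$
This is Gershgorin's theorem, proved inline. Consequently
$$\lambda\ge a_{ii}-\sum_{j\ne i}|a_{ij}|.$$
Since $a_{ii}\ge 0$, we have $a_{ii}=|a_{ii}|$, and diagonal dominance then gives $\lambda\ge |a_{ii}|-\sum_{j\neq i}|a_{ij}|\ge 0$.

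Alternatively, one can argue directly with the quadratic form, avoiding eigenvalues entirely. Write
$$x^TAx=\sum_i a_{ii}x_i^2+\sum_{i\ne j}a_{ij}x_ix_j ,$$
and bound the cross terms using $|x_ix_j|\le \tfrac12(x_i^2+x_j^2)$ together with the symmetry $a_{ij}=a_{ji}$. This gives
$$\sum_{i\ne j}|a_{ij}||x_ix_j|\le \sum_i x_i^2\sum_{j\ne i}|a_{ij}|.$$
Therefore
$$x^TAx\ge\sum_i x_i^2\Big(a_{ii}-\sum_{j\ne i}|a_{ij}|\Big)\ge 0.$$
I would include this second argument as the main proof, since it is self-contained and uses symmetry transparently.

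There is no real obstacle here. The only point needing care is where the hypotheses enter: the nonnegativity of the diagonal converts $|a_{ii}|$ into $a_{ii}$, and symmetry is needed for the quadratic-form bound (or for the real spectrum in the eigenvalue argument).

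In the application to Theorem \ref{LW}, this lemma will be used with $A=-H$. Verifying that $-H$ is diagonally dominant with nonnegative diagonal reduces to sign conditions on the $f'_{ij}$ over the region $\mathcal{C}$ of Lemma \ref{Deng}, and that verification is where the real work of Theorem \ref{LW} lies.
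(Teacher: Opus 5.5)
The paper gives no proof of this lemma; it cites it as a well-known fact and uses it directly in Theorems~\ref{LW} and~\ref{LS}. Your proposal therefore supplies an argument the paper omits, and both of your arguments are correct.

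The Gershgorin route is the textbook one, but it relies on the spectral theorem: the spectrum is real, and semidefiniteness is equivalent to a nonnegative spectrum. The quadratic-form route is fully elementary. Written as an identity rather than an inequality, it also yields more. With $R_i=\sum_{j\ne i}|a_{ij}|$ and $\sigma_{ij}\in\{\pm1\}$ the sign of $a_{ij}$,
\[
x^TAx=\sum_i (a_{ii}-R_i)\,x_i^2+\sum_{i<j}|a_{ij}|\,(x_i+\sigma_{ij}x_j)^2 .
\]
The paper's matrix $-H$ has zero row sums and off-diagonal entries $m_im_jf'_{ij}$, so this becomes $x^T(-H)x=\sum_{i<j}m_im_j(-f'_{ij})(x_i-x_j)^2$. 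That gives semidefiniteness once every $f'_{ij}\le 0$. It also shows the kernel is exactly the rotational direction $(1,\dots,1)$ whenever the pairs with $f'_{ij}<0$ link all $N$ bodies. In the paper's range only $f'_{1N}$ can vanish, so this holds for $N\ge 3$. A simple zero eigenvalue is what the stability criterion quoted in Section~4 asks for, and bare semidefiniteness does not give it.

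One small correction to your closing aside. Diagonal dominance of $-H$ with positive masses is equivalent to $f'_{ij}\le 0$ for every pair. The paper obtains this from the upper bound $\theta_{15}\le 2\arcsin\mu$ (resp.\ $\theta_{1N}\le 2\arcsin\mu$) on the widest separation, since all other separations are smaller in a convex configuration. The region $\mathcal{C}$ of Lemma~\ref{Deng} contributes only the lower bound $\theta_{15}>\pi/3$, which plays no role in the sign check.
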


Now, we can finish the proof of the theorem \ref{LW}.

\begin{proof} We have
\[-H = \left(
    \begin{array}{cccc}
    -\sum_{j\neq 1}m_1 m_j f_{1j}'& m_1 m_2 f_{12}' & \cdots  & m_1 m_5 f_{15}' \\
      m_1 m_2 f_{12}' & -\sum_{j\neq 2}m_2 m_j f_{2j}' & \cdots  &  m_2 m_5 f_{25}' \\
      \vdots &  \vdots & \ddots & \vdots    \\
      m_1 m_5 f_{15}' & m_2 m_5 f_{25}' & \cdots & -\sum_{j\neq 5} m_j m_5 f_{j5}' \\
    \end{array}
  \right)
\]
where
$$f_{12}'= -\cos(\theta_{12}) - \frac{s + (s-2) \cos(\theta_{12})}{2^{s+1}|\sin \frac{\theta_{12}}{2}|^s}, \quad f_{13}'= -\cos(\theta_{13}) - \frac{s + (s-2)\cos(\theta_{13})}{2^{s+1}|\sin \frac{\theta_{13}}{2}|^s},$$
$$ f_{14}'= -\cos(\theta_{14}) - \frac{s + (s-2)\cos(\theta_{14})}{2^{s+1}|\sin \frac{\theta_{14}}{2}|^s}, \quad f_{15}'= -\cos(\theta_{15}) - \frac{s + (s-2) \cos(\theta_{15})}{2^{s+1}|\sin \frac{\theta_{15}}{2}|^s},$$
$$f_{23}'= -\cos(\theta_{23}) - \frac{s + (s-2) \cos(\theta_{23})}{2^{s+1}|\sin \frac{\theta_{23}}{2}|^s}, \quad f_{24}'= -\cos(\theta_{24}) - \frac{s + (s-2) \cos(\theta_{24})}{2^{s+1}|\sin \frac{\theta_{24}}{2}|^s},$$
$$f_{25}'= -\cos(\theta_{25}) - \frac{s + (s-2) \cos(\theta_{25})}{2^{s+1}|\sin \frac{\theta_{25}}{2}|^s}, \quad f_{34}'= -\cos(\theta_{34}) - \frac{s + (s-2) \cos(\theta_{34})}{2^{s+1}|\sin \frac{\theta_{34}}{2}|^s}, $$
$$f_{35}'= -\cos(\theta_{35}) - \frac{s + (s-2) \cos(\theta_{35})}{2^{s+1}|\sin \frac{\theta_{35}}{2}|^s}, \quad  f_{45}'= -\cos(\theta_{45}) - \frac{s + (s-2) \cos(\theta_{45})}{2^{s+1}|\sin \frac{\theta_{45}}{2}|^s}.$$

Here, it is easy to see that the matrix is diagonally dominant if $f_{15}'$ is negative. Since $f_{15}' < 0$ when $\theta_{15} \in (0, \frac{\pi}{2})$, we just need to consider the case $\theta_{15} \in [\frac{\pi}{2}, \pi)$.

$$f_{15}' = -\cos(\theta_{15}) - \frac{s + (s-2)\cos(\theta_{15})}{2^{s+1}\sin^s \frac{\theta_{15}}{2}} = - \frac{ 2^{s+1}\sin^s \frac{\theta_{1N}}{2} \cos (\theta_{15})  + s + (s-2)\cos (\theta_{15})}{2^{s+1}\sin^s \frac{\theta_{15}}{2}} $$

So the numerator of $f_{15}'$ is
\begin{align*}
    & 2^{s+1}\sin^s \frac{\theta_{15}}{2} \cos (\theta_{15})  + s + (s-2)\cos (\theta_{15}) \\
  = & 2^{s+1}\sin^s \frac{\theta_{15}}{2} (1 - 2\sin^2\frac{\theta_{15}}{2}) + s + (s-2) (1 - 2\sin^2 \frac{\theta_{15}}{2}) \\
  = & 2\big[-2^{s+1} \sin^{s+2} \frac{\theta_{15}}{2} + 2^s \sin^s \frac{\theta_{15}}{2} - (s-2)\sin^2 \frac{\theta_{15}}{2} + s-1) \big].
\end{align*}
Let $f(t) = -2^{s+1} t^{s+2} + 2^s t^s - (s-2)t^2 + s-1 $, where $t = \sin \frac{\theta_{15}}{2}$ and $ t \in [\frac{\sqrt{2}}{2}, 1]$.

It is easy to check that $f(\frac{\sqrt{2}}{2}) = \frac{s}{2} > 0$ and $f(1)=-2^s +1 < 0$ when $s > 1$.

Since the function $f(t)$ is strictly monotonically decreasing for $t \in[\frac{\sqrt{2}}{2}, 1]$ when $s>1$, we can find only one  number $t = \mu > \frac{\sqrt{2}}{2},$ which depends on $s$, make $f(\mu)=0$. For example, we have $\mu \approx 0.810815$  for $s=3$ and $\mu \approx 0.8264$ for $s=2$.  It means that $f(t)\ge 0$ when $t \in[\frac{\sqrt{2}}{2}, \mu]$.

Let $\sin \frac{\theta_{15}}{2}=\mu$ , we have $\theta_{15}=2\arcsin \mu > \frac{\pi}{2}$.

Therefore, there exists a positive constant $\mu$ such that $f_{15}^{\prime}$ is nonpositive when $\frac{\pi}{3}<\theta_{15} \le 2\arcsin \mu$. It implies that the matrix $-H$ is diagonally dominant when $\frac{\pi}{3}<\theta_{15} \le 2\arcsin \mu$.

By the lemma \ref{bf}, we conclude that the matrix $-H$ positive semidefinite, which implies that the eigenvalues of the matrix H are nonpositive. It means that the convex symmetric $1+5$ coorbital central configuration is linearly stability when $\frac{\pi}{3}<\theta_{15}\le 2\arcsin \mu$.
\end{proof}
Furthermore we find that the above result can be extended to any convex $1+N$ coorbital central configurations. We do not need the symmetric condition.
\begin{theorem}\label{LS} For the convex $1+N$ coorbital central configurations with positive masses, if the homogeneous potential index $s>1$, there exists a positive constant $\mu$ such that the central configuration is linear stability when $0 < \theta_{1N} \le 2\arcsin \mu$ \emph{($2\arcsin \mu > \frac{\pi}{2}$)}.
\end{theorem}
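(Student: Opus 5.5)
The plan is to repeat the diagonal-dominance argument from the proof of Theorem \ref{LW}, now for a general $N\times N$ Hessian. Order the bodies so that $\theta_N<\theta_{N-1}<\cdots<\theta_1$, with all of them in an arc of length $\theta_{1N}<\pi$ (convexity). Then every pairwise difference satisfies $0<\theta_{ij}\le\theta_{1N}$ for $i<j$. The matrix $-H$ is symmetric. Its off-diagonal entries are $m_im_jf_{ij}'$ and its diagonal entries are $-\sum_{j\ne i}m_im_jf_{ij}'$. If every $f_{ij}'\le 0$, then each off-diagonal entry is nonpositive. Each diagonal entry is then nonnegative and equals exactly the sum of the absolute values of the off-diagonal entries in its row. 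So $-H$ is diagonally dominant with nonnegative diagonal, and Lemma \ref{bf} gives that $-H$ is positive semidefinite. Hence $H$ has nonpositive eigenvalues, the zero eigenvalue comes from the rotational symmetry (the vector $(1,\dots,1)^T$), and by the Renner–Sicardy criterion quoted above the configuration is linearly stable. This argument uses neither symmetry nor $N=5$.

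The whole proof therefore reduces to a one-variable statement. I will show that $g(\theta):=-\cos\theta-\frac{s+(s-2)\cos\theta}{2^{s+1}\sin^s(\theta/2)}\le 0$ for all $\theta\in(0,2\arcsin\mu]$, where $\mu$ is the root found in the proof of Theorem \ref{LW}. Since all $\theta_{ij}$ lie in $(0,\theta_{1N}]$, this suffices. The range splits into two pieces.

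\begin{itemize}
\item On $(0,\pi/2)$: here $\cos\theta>0$, so $s+(s-2)\cos\theta\ge s-|s-2|\cos\theta>0$ for $s>1$. Both terms of $g$ are then negative.
\item On $[\pi/2,2\arcsin\mu]$: I reuse the computation in the proof of Theorem \ref{LW}. With $t=\sin(\theta/2)\in[\frac{\sqrt2}{2},\mu]$, the numerator of $-g$ equals $2f(t)$, where $f(t)=-2^{s+1}t^{s+2}+2^st^s-(s-2)t^2+s-1$. Since $f$ is decreasing on this interval with $f(\mu)=0$, we get $f(t)\ge0$ there.
\end{itemize}

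Combining the two pieces gives $g\le0$ on $(0,2\arcsin\mu]$, as required. The strict inequality $2\arcsin\mu>\pi/2$ follows from $\mu>\sqrt2/2$, which comes from $f(\sqrt2/2)=s/2>0$.

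The main obstacle is justifying that $f$ is strictly decreasing on $[\frac{\sqrt2}{2},1]$ for every $s>1$; the proof of Theorem \ref{LW} only asserts this. I would check it directly:
\[
f'(t)=t\bigl[-(s+2)2^{s+1}t^{s}+s2^st^{s-2}-2(s-2)\bigr].
\]
With $u=2t^2\in[1,2]$ we have $2^st^s=u^{s/2}$, and the bracket becomes
\[
u^{s/2-1}\bigl(-(s+2)u+s\bigr)\cdot 2-2(s-2)\quad\text{(up to the exact constant)}.
\]
For $u\ge1$ the factor $-(s+2)u+s\le -2$, so the first part is at most $-2u^{s/2-1}$. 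When $s\ge2$ the term $-2(s-2)$ is also nonpositive, so the bracket is negative. When $1<s<2$ I need $2u^{s/2-1}>2(2-s)$ on $[1,2]$. Since $u^{s/2-1}\ge 2^{s/2-1}$, it suffices that $2^{s/2-1}>2-s$ for $s\in(1,2)$, and this holds because the left side exceeds $2^{-1/2}>1/2$ while the right side is below $1$. If this inequality turns out too tight for some $s$ near $1$, I would instead argue directly that $f$ has a unique zero on $[\frac{\sqrt2}{2},1]$ (for example by concavity in $u$) and that $f\ge0$ to its left. That weaker statement is all the proof actually needs.
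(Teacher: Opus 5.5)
Your proposal follows the paper's proof. You show that every $f'_{ij}\le 0$, which makes $-H$ symmetric and diagonally dominant with nonnegative diagonal, so Lemma \ref{bf} applies. You settle the sign of $f'$ directly on $(0,\pi/2)$ and on $[\pi/2,2\arcsin\mu]$ through the same auxiliary $f(t)$. You are more explicit than the paper in observing that every $\theta_{ij}$ lies in $(0,\theta_{1N}]$. That is why control of $f'$ on the whole interval is needed, not just the sign of $f'_{1N}$.

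The one step that fails as written is your check that $f$ is strictly decreasing, which the paper only asserts.

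\begin{itemize}
\item \emph{Wrong constants.} With $u=2t^2$ one has $u^{s/2}=2^{s/2}t^s$, not $2^st^s$, so your rewritten bracket has the wrong constants.
\item \emph{False reduction.} The condition you reduce to for $1<s<2$, namely $2^{s/2-1}>2-s$, is false for $s$ near $1$. At $s=1.1$ the left side is about $0.73$ and the right side is $0.9$.
\item \emph{Non sequitur.} Saying that the left side exceeds $1/2$ while the right side is below $1$ does not compare the two sides.
\end{itemize}

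No fallback is needed. Keep everything in $t$:
\[
f'(t)=t\Bigl[2^st^{s-2}\bigl(s-2(s+2)t^2\bigr)-2(s-2)\Bigr].
\]
For $t^2\ge\tfrac12$ we have $s-2(s+2)t^2\le -2$, so the bracket is at most $-2^{s+1}t^{s-2}-2(s-2)$.

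\begin{itemize}
\item If $s\ge 2$, this bound is negative.
\item If $1<s<2$, then $t\le 1$ gives $t^{s-2}\ge 1$, so the bracket is at most $2(2-s-2^s)<0$.
\end{itemize}

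Hence $f$ is strictly decreasing on $[\tfrac{\sqrt2}{2},1]$ for every $s>1$, and the rest of your argument goes through unchanged.
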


\begin{proof} For $1+N$ coorbital problem, the Hessian matrix of $V$ is
\[H = \left(
    \begin{array}{cccc}
    \sum_{j\neq 1}m_1 m_j f_{1j}'& -m_1 m_2 f_{12}' & \cdots  & -m_1 m_N f_{1N}' \\
      -m_1 m_2 f_{12}' & \sum_{j\neq 2}m_2 m_j f_{2j}' & \cdots  &  -m_2 m_N f_{2N}' \\
      \vdots &  \vdots & \ddots & \vdots    \\
      -m_1 m_N f_{1N}' & -m_2 m_N f_{2N}' & \cdots & \sum_{j\neq N} m_j m_N f_{jN}' \\
    \end{array}
  \right)
\]
where $f_{ij}' = -\cos(\theta_{ij}) - \frac{s + (s-2) \cos(\theta_{ij})}{2^{s+1}|\sin \frac{\theta_{ij}}{2}|^s}.$

The similar to the Theorem \ref{LS}, we need to prove that the eigenvalues of $-H$ are nonnegative under our assumptions.

Here, we see that the matrix is diagonally dominant if $f_{1N}'$ is negative. Since $f_{1N}' < 0$ when $\theta_{1N} \in (0, \frac{\pi}{2})$, we just need to consider $\theta_{1N} \in [\frac{\pi}{2}, \pi)$.

For
$$f_{1N}' = -\cos(\theta_{1N}) - \frac{s + (s-2) \cos(\theta_{1N})}{2^{s+1}\sin^s \frac{\theta_{1N}}{2}} = - \frac{ 2^{s+1}\sin^s \frac{\theta_{1N}}{2} \cos (\theta_{1N})  + s + (s-2)\cos (\theta_{1N})}{2^{s+1}\sin^s \frac{\theta_{1N}}{2}},$$
the numerator of $f_{1N}'$ is
\begin{align*}
    & 2^{s+1}\sin^s \frac{\theta_{1N}}{2} \cos (\theta_{1N})  + s + (s-2)\cos (\theta_{1N}) \\
  = & 2\big[-2^{s+1} \sin^{s+2} \frac{\theta_{1N}}{2} + 2^s \sin^s \frac{\theta_{1N}}{2} - (s-2)\sin^2 \frac{\theta_{1N}}{2} + s-1\big].
\end{align*}
Let $f(t) = -2^{s+1} t^{s+2} + 2^s t^s - (s-2)t^2 + s-1 $, where $t = \sin \frac{\theta_{1N}}{2}$ and $ t \in [\frac{\sqrt{2}}{2}, 1]$.
We have $f(\frac{\sqrt{2}}{2}) = \frac{s}{2} > 0$ and $f(1)=-2^s +1 < 0$ for $s > 1$.

Since the function $f(t)$ is strictly monotonically decreasing for $t \in[\frac{\sqrt{2}}{2}, 1]$ when $s>1$, we can find only one  number $t = \mu > \frac{\sqrt{2}}{2}$ make $f(\mu)=0$, which implies that $f(t)\ge 0$ when $t \in[\frac{\sqrt{2}}{2}, \mu]$.

Let $\sin \frac{\theta_{1N}}{2}=\mu$ , we have $\theta_{1N}=2\arcsin \mu > \frac{\pi}{2}$.

Therefore, there exists a positive constant $\mu$ such that $f_{1N}^{\prime}$ is nonpositive when $0 <\theta_{1N} \le 2\arcsin \mu$. It implies that the matrix $-H$ is diagonally dominant when $0 <\theta_{1N} \le 2\arcsin \mu$.

By the lemma \ref{bf}, we conclude that the matrix $-H$ positive semidefinite, which implies that the eigenvalues of the matrix H are nonpositive. So, the convex $1+N$ coorbital central configuration is linear stability when $0 <\theta_{1N}\le 2\arcsin \mu$ for $s>1$.
\end{proof}

\phantomsection

\end{document}